\newcommand{\noun}[1]{\textsc{#1}}
\providecommand{\tabularnewline}{\\}
\numberwithin{equation}{section}
\numberwithin{figure}{section}
\theoremstyle{plain}
\newtheorem{thm}{\protect\theoremname}
\theoremstyle{plain}
\newtheorem{lem}[thm]{\protect\lemmaname}
\theoremstyle{plain}
\newtheorem{cor}[thm]{\protect\corollaryname}
\providecommand{\corollaryname}{Corollary}
\providecommand{\lemmaname}{Lemma}
\providecommand{\theoremname}{Theorem}
\begin{document}
\title{\noun{All Hurwitz Algebras from 3D Geometric Algebras}}
\author{Daniele Corradetti$^{*}$, Richard Clawson$^{**}$, Klee Irwin$^{**}$}
\begin{abstract}
Hurwitz algebras are unital composition algebras widely known in algebra
and mathematical physics for their useful applications. In this paper,
inspired by works of Lesenby and Hitzer, we show how to embed all
seven Hurwitz algebras (division and split) in 3D geometric algebras,
i.e. $\mathcal{G}\left(p,q\right)$ with $p+q=3$. This is achieved
studying the even subalgebra, which is always of quaternionic
or split-quaternionic type, and the algebra generated by the pseudoscalar
which is always of complex or split-complex type. Reversion, inversion
and Clifford conjugation correspond to biquaternionic, complex and
quaternionic conjugation respectively. Octonionic algebras, i.e.,
octonions and split-octonions, are obtained in two ways introducing two different
products, replicating a variation of the Cayley-Dickson process. Octonionic
conjugation is achieved in the geometric algebra formalism through
the involution defined by the full grade inversion.
\end{abstract}

\maketitle

\section{Introduction }

Hurwitz algebras are composition algebras known for their wide applications
in algebra and mathematical physics that range over projective geometry,
Lie theory, ring theory, etc. (see \cite{CCMA-Magic,corr Notes Octo,Corr RealF}
and references therein). They are a distinct class of algebras that
possess a unit element and a non-degenerate norm $n$ satisfying the
multiplicative property, i.e. $n\left(xy\right)=n\left(x\right)n\left(y\right)$.
These algebras are totally classified due to Hurwitz's theorem, which
states that there are only four division algebras over the real numbers:
the real numbers $\mathbb{R}$, the complex numbers $\mathbb{C}$,
the quaternions $\mathbb{H}$, and the octonions $\mathbb{O}$. Additionally,
there are their respective split counterparts, i.e., the split-complex
numbers $\mathbb{C}_{s}$, the split-quaternions $\mathbb{H}_{s}$, and
the split-octonions $\mathbb{O}_{s}$ forming a total of seven Hurwitz algebras.
The key to understanding these algebras lies in their properties.
For instance, $\mathbb{R}$ is commutative, associative, and totally
ordered, $\mathbb{C}$ is commutative and associative, $\mathbb{H}$
is associative, and $\mathbb{O}$ is non-associative but alternative.
The split counterparts share the same properties, except they are
not division algebras and contain zero divisors. The non-associativity
of the octonions and the split octonions were until recentely one
of the main obstacles to the realisation of such algebras through
Clifford algebras, whose product is associative. Indeed, previously
known embeddings in Clifford algebra for octonions were in high dimensions
such as $Cl\left(0,7\right)$, which is of dimension 128, or $Cl\left(8,0\right)$,
which is of dimension 256 (see \cite[Ch. 7.3 and 23]{Lo01}). Lasenby
\cite{La22} and Hitzer \cite{Hi22} used a different approach, constructing a novel product from Clifford's geometric product over independent components and achieving embeddings of the
octonions even in 3D geometric algebras, i.e. $\mathcal{G}\left(p,q\right)$
with $p+q=3$.

This paper completes Lasenby's and Hitzer's work, showing for the first
time how to embed all seven Hurwitz algebras in 3D geometric algebras
(division and split versions) following the same natural procedure
and not limited to the bilinear product but extending the treatment
to the conjugation and thus to the norm of those algebras (indeed
a composition algebra is defined by a bilinear product and a quadratic
form under which the algebra is compositional). 

The paper is structured as follows. In section \ref{sec:Hurwitz-algebras}
we review Hurwitz algebras, along with their basic properties and
a simple realisation with a formalism that closely mirrors that of
geometric algebra. We have decided in this section to allocate space
for presenting biquaternionic algebras, i.e. $\mathbb{C\otimes\mathbb{H}}$,
$\mathbb{C}_{s}\otimes\mathbb{H}$, $\mathbb{C\otimes}\mathbb{H}_{s}$
and $\mathbb{C}_{s}\mathbb{\otimes}\mathbb{H}_{s}$, due to their
significance in the subsequent sections. In section \ref{sec:Geometric-algebras},
we revisit the foundations of geometric algebra and their involutions,
such as reversion, inversion and Clifford conjugation, setting the
stage for the subsequent sections. Main results are in section \ref{sec:Hurwitz-algebras-from}.
Firstly, we show the isomorphism between the 3D geometric algebras,
i.e., $\mathcal{G}\left(p,q\right)$ with $p+q=3$, with biquaternionic
algebras (see Table \ref{tab:Correspondence-between-Geometric}).
This is achieved considering the even subalgebra $\mathcal{G}\left(p,q\right)_+,$, which is always of quaternionic or split-quaternionic
type, and the algebra generated by the pseudoscalar $\textit{Ps}\left(\mathcal{G}\right)$
which is always of complex or split-complex type. Then, since the
pseudoscalar subalgebra $\textit{Ps}\left(\mathcal{G}\right)$ commutes with
all the elements of $\mathcal{G}\left(p,q\right)$ one has 
\begin{equation}
\mathcal{G}\left(p,q\right)\cong \textit{Ps}\left(\mathcal{G}\right)\otimes\mathcal{G}\left(p,q\right)_+,
\end{equation}
which is shown in section \ref{subsec:Complex-and-quaternionic}.
In section \ref{subsec:Octonionic-algebras}, we obtain octonions
$\mathbb{O}$ and split-octonions $\mathbb{O}_{s}$ (depending on the signature $(p,q)$) in two ways, first using the product (here denoted $\bullet$) introduced by Lasenby \cite{La22} and Hitzer \cite{Hi22},
replicating a variation of the Cayley-Dickson process for sublgebras
of $\mathcal{G}\left(p,q\right)$, and then using a variation of this product $\bullet_{-}$. To prove the octonionic nature
of the resulting algebras we proceed defining an octonionic norm through
the full grade inversion and then proving that the norm has the compositional
property under the product $\bullet$. Hurwitz's theorem then assures
that, since $\left(\mathcal{G}\left(p,q\right),\bullet,N\right)$
and $\left(\mathcal{G}\left(p,q\right),\bullet_{-},N\right)$ are
both unital and eight-dimensional, then they must be isomorphic to
either the octonions $\mathbb{O}$ or the split-octonions $\mathbb{O}_{s}$.
A direct inspection of the explicit formula for the norm allows the
complete classification given in Table \ref{tab:Correspondence-between-Geometric}.

\section{\label{sec:Hurwitz-algebras}Hurwitz algebras}

An\emph{ algebra}, denoted by $A$, is a vector space over a field
$\mathbb{F}$ (henceforth assumed to be the reals $\mathbb{R}$)
equipped with a bilinear multiplication. The specific properties of
the multiplication operation in an algebra lead to various classifications.
Specifically, an algebra $A$ is said to be\emph{ commutative} if
$x\cdot y=y\cdot x$ for every $x,y\in A$; \emph{associative}
if it satisfies $x\cdot\left(y\cdot z\right)=\left(x\cdot y\right)\cdot z$;
\emph{alternative} if $x\cdot\left(y\cdot y\right)=\left(x\cdot y\right)\cdot y$;
and finally, \emph{flexible} if $x\cdot\left(y\cdot x\right)=\left(x\cdot y\right)\cdot x$. 

Since $A$ must be a group with respect to addition, every algebra
has a zero element $0\in A$. Furthermore, if the algebra does not
have zero divisors, it is referred to as a \emph{division} algebra,
i.e. an algebra for which $x\cdot y=0$ implies $x=0$ or $y=0$.
While the zero element is a universal feature in any algebra, the
algebra is termed \emph{unital} if there exists an element $1\in A$
such that $1\cdot x=x\cdot1=x$ for all $x\in A$. 

Consider an algebra $A$. Then a quadratic form $n$ on $A$ over
the field $\mathbb{F}$ is called a \emph{norm} and its polarization
is given by 
\begin{equation}
\left\langle x,y\right\rangle =n\left(x+y\right)-n\left(x\right)-n\left(y\right),\label{eq:polarNorm}
\end{equation}
so that the norm can be explicitly given as
\begin{equation}
n\left(x\right)=\frac{1}{2}\left\langle x,x\right\rangle ,\label{eq:N(x)=1/2(x,x)}
\end{equation}
for every $x\in A$. An algebra $A$ with a non-degenerate norm $n$
that satisfies the following multiplicative property, i.e.

\begin{align}
n\left(x\cdot y\right) & =n\left(x\right)n\left(y\right),\label{eq:comp(Def)}
\end{align}
for every $x,y\in A$, is called a \emph{composition} algebra and
is denoted with the triple $\left(A,\cdot,n\right)$, or simply
$A$ if there is cause for ambiguity. 

Composition algebras that possess a unit element are called \emph{Hurwitz
algebras} and, from now on, we will always denote the bilinear
product over Hurwitz algebras with the symbol $\bullet$. The interplay
between the multiplicative property of the norm in (\ref{eq:comp(Def)})
and the existence of a unit element, is full of interesting implications.
Indeed, every Hurwitz algebra is endowed with an order-two antiautomorphism
called \emph{conjugation}, defined by 
\begin{equation}
\overline{x}=\left\langle x,1\right\rangle 1-x.\label{eq:conjugation}
\end{equation}
The linearization of the norm, when paired with the composition,
results in the notable relation $\left\langle x\bullet y,z\right\rangle =\left\langle y,\overline{x}\bullet z\right\rangle ,$
which implies that $\overline{x\bullet y}=\overline{y}\bullet\overline{x}$
and 
\begin{equation}
x\bullet\overline{x}=n\left(x\right)1.\label{eq:ConjugNorm}
\end{equation}
Moreover, from the existence of a unit element in a composition algebra
we have that elements with unit norm form a goup and, even more strikingly,
that the whole algebra must be alternative (for a proof see \cite[Prop. 2.2]{ElDuque Comp}). 

A major theorem by Hurwitz proves that the only unital composition
algebras over the reals are $\mathbb{R},\mathbb{C},\mathbb{H}$ and
$\mathbb{O}$ accompanied by their split counterparts $\mathbb{C}_{s},\mathbb{H}_{s},\mathbb{O}_{s}$
(see \cite[Cor. 2.12]{Hurwitz98,ElDuque Comp}). Consequently, there
are only seven Hurwitz algebras, each having real dimensions of 1,
2, 4, or 8. Out of these, four are also division algebras, i.e. $\mathbb{R},\mathbb{C},\mathbb{H}$
and $\mathbb{O}$, while three are split algebras and thus have zero
divisors, i.e. $\mathbb{C}_{s},\mathbb{H}_{s},\mathbb{O}_{s}$. The
properties of such algebras are quite different from one another. More
specifically, $\mathbb{R}$ is also totally ordered, commutative and
associative; $\mathbb{C}$ is just commutative and associative; $\mathbb{H}$
is only associative and, finally, $\mathbb{O}$ is only alternative.
\begin{table}
\begin{centering}
\begin{tabular}{|c|c|c|c|c|c|}
\hline 
\textbf{Hurwitz} & \textbf{O.} & \textbf{C.} & \textbf{A.} & \textbf{Alt.} & \textbf{F.}\tabularnewline
\hline 
\hline 
$\mathbb{R}$ & Yes & Yes & Yes & Yes & Yes\tabularnewline
\hline 
$\mathbb{C}$, $\mathbb{C}_{s}$ & No & Yes & Yes & Yes & Yes\tabularnewline
\hline 
$\mathbb{H}$,$\mathbb{H}_{s}$ & No & No & Yes & Yes & Yes\tabularnewline
\hline 
$\mathbb{O}$,$\mathbb{O}_{s}$ & No & No & No & Yes & Yes\tabularnewline
\hline 
\end{tabular}
\par\end{centering}
\centering{}{\small{}\bigskip{}
}\caption{\emph{\label{tab:Hurwitz-para-Hurwitz}On the left,} we have summarized
the algebraic properties, i.e. totally ordered (O), commutative (C),
associative (A), alternative (Alt), flexible (F), of all Hurwitz algebras,
namely $\mathbb{R},\mathbb{C},\mathbb{H}$ and $\mathbb{O}$ along
with their split counterparts $\mathbb{C}_{s},\mathbb{H}_{s},\mathbb{O}_{s}$. }
\end{table}
 As shown by Table \ref{tab:Hurwitz-para-Hurwitz} all properties
of $\mathbb{R},\mathbb{C},\mathbb{H}$ and $\mathbb{O}$ are valid
also for the split companions, the only difference being that the latter
are not division algebras and do have zero divisors. Generalizations
of the Hurwitz Theorem can be done over arbitrary fields (see \cite[p. 32]{ZSSS})
but for our purposes this will not be needed.

\subsection{\label{subsec:Complex-and-split-complex}Complex and split-complex }

Both the complex numbers $\mathbb{C}$ and the split-complex numbers $\mathbb{C}_{s}$
are bidimensional algebras over the reals. Given a basis $\left\{ 1,e_{1}\right\} $,
where $1$ indicates the unit element, the bilinear product of the
complex numbers is defined by the non-trivial relation
\begin{equation}
\begin{array}{ccc}
e_{1}\bullet e_{1} & = & -1,\end{array}\label{eq:complex1}
\end{equation}
 while for the split-complex numbers one requires

\begin{equation}
\begin{array}{ccc}
e_{1}\bullet e_{1} & = & 1.\end{array}\label{eq:splitcomplex1}
\end{equation}
From the relations above it is straightforward to see that both algebras
are unital, commutative and associative. Nevertheless, while the complex
numbers are a division algebra, the split-complex numbers are not.
Indeed, if $e_{1}\bullet e_{1}=1,$ then $\left(e_{1}+1\right)\left(e_{1}-1\right)=0$.
In the longstanding notational tradition dating back to Euler, one
usually sets $e_{1}=\text{\textbf{i}}$ and designates $\left\{ 1,\text{\textbf{i}}\right\} $
as the basis for both the complex numbers $\mathbb{C}$ and the split-complex numbers
$\mathbb{C}_{s}$. As for the conjugation, in both cases one
has the involution defined by
\begin{equation}
\text{\textbf{i}}\longrightarrow\overline{\text{\textbf{i}}}=-\text{\textbf{i}}.
\end{equation}
Then, from $\overline{x}\bullet x=n\left(x\right)1$, one obtains
the norm that turns $\mathbb{C}$ and $\mathbb{C}_{s}$ into composition algebras.

\subsection{\label{subsec:Quaternions-and-split-quaternion}Quaternions and split-quaternions }

Quaternions $\mathbb{H}$ and split-quaternions $\mathbb{H}_{s}$
are the only four-dimensional Hurwitz algebras. Given the basis $\left\{ 1,e_{1},e_{2},e_{3}\right\} $,
with $1$ as the unit element, one obtains the quaternionic product
$\mathbb{H}$ by defining a bilinear product with 
\begin{equation}
\begin{array}{ccc}
e_{i}\bullet e_{i} & = & -1,\\
e_{i}\bullet e_{j} & = & -e_{j}\bullet e_{i},\\
e_{i}\bullet e_{i+1} & = & e_{i+2},
\end{array}\label{eq:quaternionicProduct}
\end{equation}
where $i\in\left\{ 1,2,3\right\} $ and operations on indices are
modulo $3$. Similarly, the split-quaternionic product $\mathbb{H}_{s}$
is given by 
\begin{equation}
\begin{array}{cccc}
e_{1}\bullet e_{1} & = & -e_{2}\bullet e_{2} & =-e_{3}\bullet e_{3}=-1,\\
e_{i}\bullet e_{j} & = & -e_{j}\bullet e_{i},\\
e_{i+1}\bullet e_{i} & = & \left(-1\right)^{i}e_{i+2},
\end{array}\label{eq:splitquaternionicProduct-1}
\end{equation}
where $i\in\left\{ 1,2,3\right\} $ and operations on indices are
modulo $3$. 

Straightforward calculations show that the resulting algebras are
both associative (thus alternative and flexible). Nevertheless, the
definition of the quaternionic and split-quaternionic product is evidently
non-commutative. As in the case of complex numbers, we will often
adopt the longstanding convention that sets $e_{1}=\text{\textbf{i}}$,
$e_{2}=\text{\textbf{j}}$ and $e_{3}=\text{\textbf{k}}$ and for
which a basis of both the algebras is given by $\left\{ 1,\text{\textbf{i}},\text{\textbf{j}},\text{\textbf{k}}\right\} $.

As for the conjugation, in both cases one has the involution defined
by
\begin{equation}
e_{i}\longrightarrow\widetilde{e_{i}}=-e_{i}.
\end{equation}
Then, from $x\bullet\widetilde{x}=n\left(x\right)1$, one obtains
the norm that turns both quaternions $\mathbb{H}$
and split-quaternions $\mathbb{H}_{s}$ into composition algebras.

\subsection{\label{subsec:Octonions-and-split-octonions}Octonions and split-octonions}

Octonions $\mathbb{O}$ and split-octonions $\mathbb{O}_{s}$ are
the only eight-dimensional Hurwitz algebras. Given the basis $\left\{ 1,e_{1},e_{2},e_{3},e_{4},e_{5},e_{6},e_{7}\right\} $,
with $1$ as the unit element, one obtains the octonionic product
$\mathbb{O}$ by defining a bilinear product with 
\begin{equation}
\begin{array}{ccc}
e_{i}\bullet e_{i} & = & -1,\\
e_{i}\bullet e_{j} & = & -e_{j}\bullet e_{i},\\
e_{i}\bullet e_{i+1} & = & e_{i+3},
\end{array}\label{eq:octonionicProduct}
\end{equation}
where $i\in\left\{ 1,...,7\right\} $ and operations on indices are
modulo $7$. To obtain the split-octonionic product one has to set
\begin{equation}
\begin{array}{cccc}
e_{i}\bullet e_{i} & = & -1, & \text{ }i\in\left\{ 1,2,3\right\} ,\\
e_{i}\bullet e_{i} & = & 1,\text{ } & i\notin\left\{ 1,2,3\right\} ,\\
e_{i}\bullet e_{j} & = & -e_{j}\bullet e_{i},\\
e_{i}\bullet e_{j} & = & \varepsilon_{ijk}e_{k},
\end{array}\label{eq:SplitOctonionic}
\end{equation}
 where $\varepsilon_{ijk}$ is the is the Levi-Civita symbol with
value $1$ when $\left(ijk\right)\in\left\{ \left(123\right),\left(154\right),\left(176\right),\left(264\right),\left(257\right),\left(374\right),\left(365\right)\right\} $.

As in the previous cases, we will often adopt the notational convention
for which $e_{1}=\text{\textbf{i}}$, $e_{2}=\text{\textbf{j}}$ $e_{3}=\text{\textbf{k}}$,
$e_{4}=\text{\textbf{l}}$. According to this notation a basis for
both the algebras is given by $\left\{ 1,\text{\textbf{i}},\text{\textbf{j}},\text{\textbf{k}},\text{\textbf{l}},\text{\textbf{li}},\text{\textbf{lj}},\text{\textbf{lk}},\right\} $.
As for the conjugation, in both cases one has the involution defined
by
\begin{equation}
e_{i}\longrightarrow e_{i}^{*}=-e_{i},
\end{equation}
with $i\in\left\{ 1,...,7\right\} $. Then, from $x\bullet x^{*}=n\left(x\right)1$,
one obtains the norm that turns both Octonions
$\mathbb{O}$ and split-octonions $\mathbb{O}_{s}$ into composition algebras.

\subsection{\label{subsec:Biquaternionic-algebras}Biquaternionic algebras}

Beside Hurwitz algebras, also biquaternionic algebras are important.
In fact, biquaternionic algebras are eight-dimensional algebras obtained
from tensor products of two Hurwitz algebras $C\otimes H$, one of
which is a complex algebra $C\in\left\{ \mathbb{C},\mathbb{C}_{s}\right\} $
while the other is a quaternionic algebra $H\in\left\{ \mathbb{H},\mathbb{H}_{s}\right\} $. 

In a biquaternionic algebra, every element $x\in C\otimes H$ is of
the form $x=z\otimes q$ where $z\in C$ and $q\in H$. Then the bilinear
product over $C\otimes H$ is defined by
\begin{align}
x\bullet y & =\left(z_{1}z_{2}\right)\otimes\left(q_{1}q_{2}\right),\label{eq:tensorprod}
\end{align}
where in this case, for simplicity and to avoid ambiguity, we used
juxtaposition to indicate the bilinear product over $C$ and over
$H$ respectively. In all biquaternionic algebras a canonical basis
is given by

\begin{equation}
\mathscr{Q}=\left\{ 1,\iota,\text{\textbf{i}},\text{\textbf{j}},\text{\textbf{k}},\iota\text{\textbf{i}},\iota\text{\textbf{j}},\iota\text{\textbf{k}}\right\} ,
\end{equation}
where $\iota\text{\textbf{i}},\iota\text{\textbf{j}},\iota\text{\textbf{k}}$
is just a short notation for $\iota\otimes\text{\textbf{i}},\iota\otimes\text{\textbf{j}},\iota\otimes\text{\textbf{k}}$.
The element $\iota$ commutes with all elements of the basis and $\iota^{2}=\pm1$
depending on $C$ being $\mathbb{C}$ or $\mathbb{C}_{s}$, $\text{\textbf{i}}^{2}=-1$
while $\text{\textbf{j}}^{2}=\text{\textbf{k}}^{2}=\pm1$ depending
on $H$ being $\mathbb{H}$ or $\mathbb{H}_{s}$. In this basis every
element has a canonical decomposition
\begin{equation}
x=z_{0}+z_{1}\text{\textbf{i}}+z_{2}\text{\textbf{j}}+z_{3}\text{\textbf{k}},
\end{equation}
where $z_{0},z_{1},z_{2}$ and $z_{3}$ are (split-)complex numbers.

Finally, note that since every element is of the form $x=z\otimes q$,
then every biquaternionic algebra inherits three different conjugations
\begin{equation}
\widetilde{x}=z\otimes\widetilde{q},\,\,\overline{x}=\overline{z}\otimes q,\,\,x^{\dagger}=\overline{z}\otimes\widetilde{q},\label{eq:biquaternionicConj}
\end{equation}
where $z\longrightarrow\overline{z}$ and $q\longrightarrow\widetilde{q}$
indicate the complex and the quaternionic conjugation respectively.
In the future sections we will make use of all three of these conjugation
and make contact with the corresponding involutions from the geometric
algebra.

\section{\label{sec:Geometric-algebras}Geometric algebras}

For our purposes, geometric algebras are just a special case of Clifford
algebras over the field of reals $\mathbb{R}$. A Clifford algebra
$Cl\left(V,Q\right)$ is an algebra, i.e. a vector space $V$ over
a field $\mathbb{F}$ endowed with a bilinear product, that is unital
and associative, i.e. there exists an element $1\in Cl\left(V,Q\right)$
such that $1x=x1=x$ and is such that $\left(xy\right)z=x\left(yz\right)$,
and equipped with a \emph{quadratic form} $Q$ from the vector space
$V$ to the field $\mathbb{F}$ such that 
\begin{equation}
v^{2}=Q\left(v\right)1,\label{eq:ideal}
\end{equation}
for every $v\in V$. More specifically, the Clifford algebra $Cl\left(V,Q\right)$
is isomorphic to the tensor algebra $T\left(V\right)$ quotiented
by the ideal $\left\langle v\otimes v-Q\left(v\right)1\right\rangle $.
Polarising the quadratic form one obtains 
\begin{equation}
\left\langle v,w\right\rangle =\frac{1}{2}\left(Q\left(v+w\right)-Q\left(v\right)-Q\left(w\right)\right),\label{eq:polarisation}
\end{equation}
and thus we have the following foundational formula
\begin{equation}
vw+wv=2\left\langle v,w\right\rangle .\label{eq:fundamental}
\end{equation}

Setting a basis for the vector space $V$ will then give us a natural
basis for the Clifford algebra $Cl\left(V,\mathbb{F}\right)$. Let
$\left\{ e_{1},...,e_{n}\right\} $ be an orthogonal basis for $V$
with respect to the quadratic form $Q$ , i.e., $\left\langle e_{i},e_{j}\right\rangle =Q\left(e_{i}\right)\delta_{ij}$
. The previous formula (\ref{eq:fundamental}) implies that 
\begin{align}
e_{i}e_{j} & =-e_{j}e_{i},\text{ when }i\neq j\label{eq:rule1}\\
e_{i}^{2} & =Q\left(e_{i}\right).\label{eq:rule2}
\end{align}
 We then have that the set of elements 
\begin{equation}
\mathscr{B}=\left\{ e_{i_{1}}...e_{i_{k}}:1\leq i_{1}\leq\ldots\leq i_{k}\leq n\text{ and }0\leq k\leq n\right\} ,
\end{equation}
is a basis for $Cl\left(V,Q\right)$ and applying (\ref{eq:rule1})
and (\ref{eq:rule2}) one easily finds that the dimension of $Cl\left(V,\mathbb{F}\right)$,
i.e. the cardinality of linearly independent elements of $\mathscr{B}$,
is $2^{n}$.

For our purposes we will call \emph{geometric algebra} any Clifford
algebra over real vector spaces. Since quadratic norms over real vector
spaces are fully classified by their signature $\left(p,q\right)$
we will designate with $\mathcal{G}\left(p,q\right)$ the Clifford
algebra $Cl\left(V,Q\right)$ where the quadratic form has signature
$\left(p,q\right)$. Moreover, henceforth, following an historical
tradition, we will call the Clifford product the \emph{geometric
product} and denote it with the juxtaposition of the two elements,
i.e., $xy$ for every $x,y\in\mathcal{G}\left(p,q\right)$; we define the
\emph{inner product} as the polarisation $\left\langle \cdot,\cdot\right\rangle $
of the quadratic form $Q$ and denote it with the dot product, i.e.
$\left\langle x,y\right\rangle \coloneqq x\cdot y\in\mathbb{R}$;
finally we will call the antisymmetric part of the geometric product the \emph{exterior product} of $x,y\in\mathcal{G}\left(p,q\right)$
and denote it with the wedge
product $x\land y$. Summarising, for every $x,y\in\mathcal{G}\left(p,q\right)$,
we have 
\begin{align}
x\cdot y & \coloneqq\frac{1}{2}\left(xy+yx\right),\label{eq:symmetric}\\
x\land y & \coloneqq\frac{1}{2}\left(xy-yx\right),\label{eq:antisymmetric}
\end{align}
and therefore 
\begin{align}
xy & =x\cdot y+x\land y,\label{eq:productxy}\\
yx & =x\cdot y-x\land y.\label{eq:productyx}
\end{align}

As Clifford algebras are graded algebra, we introduce the grade selection
bracket $\left\langle x\right\rangle _{k}$ to indicate the grade
$k$ part of a multivector $x\in\mathcal{G}\left(p,q\right)$. We
also indicate grade parity with a subscript, $x_{+}\in\mathcal{G}\left(p,q\right)_{+}$
and $x_{-}\in\mathcal{G}\left(p,q\right)_{-}$ depending if the parity
is even or odd respectively.

Four involutions are naturally defined over over geometric algebras:
\emph{reversion}, \emph{inversion}, \emph{Clifford conjugation} and
\emph{full grade inversion}. 

We define \emph{reversion} as the anti-involution $x\longrightarrow x^{\dagger}$
that reverses the order of vector factors in any product, i.e., the
involution defined axiomatically by 
\begin{equation}
\left\langle x\right\rangle _{1}^{\dagger}=\left\langle x\right\rangle _{1},\left(xy\right)^{\dagger}=y^{\dagger}x^{\dagger}.
\end{equation}
To show how this involution acts on a basis, let $x$ be a rank-$k$
multivector given by the geometric product $x=e_{1}e_{2}...e_{k}$,
with $e_{1},...,e_{k}\in\mathbb{R}^{n}$. Then $x^{\dagger}$ is the
product of the same vectors in the reversed order, i.e.,
\begin{align}
x^{\dagger} & =e_{k}...e_{2}e_{1}.
\end{align}
Another involution is called \emph{inversion, }here denoted by $x\longrightarrow\overline{x}$,
which changes the sign of each vector, and hence of each odd multivector,
i.e.
\begin{equation}
\overline{x}=x_{+}-x_{-}.
\end{equation}
\emph{Clifford conjugation}, denoted by $x\longrightarrow\widetilde{x}$,
combines reversion with inversion so that
\begin{equation}
\widetilde{x}=\overline{x}^{\dagger}.
\end{equation}
Finally, the canonical fourth involution is called the \emph{full
grade inversion} $x\longrightarrow x^{*}$ which changes sign of all
non-zero grades, i.e. 
\begin{equation}
x^{*}=2\left\langle x\right\rangle _{0}-x=x_{+}^{\dagger}-x_{-}.\label{eq:fullGradeInversion}
\end{equation}
As we will see in the next section, all these involutions will be
of major importance for this study. 

\section{\label{sec:Hurwitz-algebras-from}Hurwitz algebras from 3D Geometric
algebras }

We now concretely analyze 3D geometric algebras, i.e., the geometric
algebras over the three-dimensional space $\mathbb{R}^{3}$. Let $\mathcal{G}\left(p,q\right)$
be the the Clifford algebra $Cl_{p,q}\left(\mathbb{R}^{3}\right)$,
i.e., where the quadratic form $Q$ has signature $\left(p,q\right)$.
Since the vector space $\mathbb{R}^{3}$ is of dimension $3$ over
the reals, then $p+q=3$ and the dimension of the resulting geometric
algebra is 8-dimensional, i.e., $\text{dim}\mathcal{G}\left(p,q\right)=2^{3}=8$. 

Let $\left\{ e_{1},e_{2},e_{3}\right\} $ be an orthonormal basis
for $\mathbb{R}^{3}$, then 
\begin{equation}
\mathscr{B}=\left\{ 1,e_{1},e_{2},e_{3},e_{1}e_{2},e_{2}e_{3},e_{1}e_{3},e_{1}e_{2}e_{3}\right\} ,\label{eq:Basis3D-GA}
\end{equation}
is a basis for $\mathcal{G}\left(p,q\right)$ and assume that in such
basis $Q$ is diagonal, i.e., $Q=\text{diag\ensuremath{\left(\lambda_{1},\lambda_{2},\lambda_{3}\right)}}$
with $\lambda_{1},\lambda_{2},\lambda_{3}\in\left\{ \pm1\right\} $.
We then have the following relations
\begin{align}
e_{i}^{2} & =\lambda_{i},\quad\left(e_{i}e_{j}\right)^{2}=-\lambda_{i}\lambda_{j},\quad\left(e_{1}e_{2}e_{3}\right)^{2}=-\lambda_{1}\lambda_{2}\lambda_{3},\label{eq:SquaredRelations}
\end{align}
where $i\neq j$. Moreover, one also verifies that 

\begin{equation}
\begin{array}{ccc}
\left(e_{1}e_{2}\right)\left(e_{2}e_{3}\right) & = & \lambda_{2}\left(e_{1}e_{3}\right),\\
\left(e_{2}e_{3}\right)\left(e_{1}e_{3}\right) & = & \lambda_{3}\left(e_{1}e_{2}\right),\\
\left(e_{1}e_{3}\right)\left(e_{1}e_{2}\right) & = & \lambda_{1}\left(e_{2}e_{3}\right).
\end{array}\label{eq:QuaternionicRelations}
\end{equation}

Regardless of the basis, in the special case of 3D geometric algebras
there are a few properties of the involutions that will be useful in
the following sections. First, although $xx^{\dagger}\neq x^{\dagger}x$,
nevertheless, since scalars commute with all bivectors in $\mathcal{G}\left(p,q\right)_{+}$
and trivectors commute with all vectors in $\mathcal{G}\left(p,q\right)_{-}$,
we have 
\begin{align}
\left(xx^{\dagger}\right)_{+} & =\left(x^{\dagger}x\right)_{+},\\
\left(xx^{\dagger}\right)_{-} & =\left(x^{\dagger}x\right)_{-},
\end{align}
which will be of paramount importance in the following calculations.
Furthermore, since $xx^{\dagger}$ is equal to its own reverse, i.e.
$xx^{\dagger}=\left(xx^{\dagger}\right)^{\dagger}$, then it must
be either a scalar or a vector, i.e. $xx^{\dagger}=\left\langle xx^{\dagger}\right\rangle _{0,1}$,
so that 
\begin{align}
\left(xx^{\dagger}\right)_{+} & =\left\langle xx^{\dagger}\right\rangle _{0},\\
\left(xx^{\dagger}\right)_{-} & =\left\langle xx^{\dagger}\right\rangle _{1},
\end{align}
which, again will be useful in section \ref{subsec:Octonionic-algebras}.
We are now ready for the recovering of all Hurwitz algebras from 3D
geometric algebras.

\subsection{\label{subsec:Complex-and-quaternionic}Complex and quaternionic
algebras }

Relations (\ref{eq:QuaternionicRelations}) along with $\left(e_{i}e_{j}\right)^{2}=-\lambda_{i}\lambda_{j}$
imply that the \emph{even subalgebra} $\mathcal{G}\left(p,q\right)_+,$
of any 3D geometric algebra $\mathcal{G}\left(p,q\right)$, i.e.,
the subalgebra spanned by the basis $\mathscr{R}=\left\{ 1,e_{1}e_{2},e_{2}e_{3},e_{1}e_{3}\right\} $ endowed with the
geometric product, is a quaternionic or split-quaternionic subalgebra
depending on the signature $\left(p,q\right)$. More specifically,
one obtains that in $\mathcal{G}\left(3,0\right)$ and $\mathcal{G}\left(0,3\right)$ the even subalgebra is isomorphic to that of quaternions, i.e.
$\mathcal{G}\left(p,q\right)_+,\cong\mathbb{H}$. On the other hand,
in $\mathcal{G}\left(1,2\right)$ and $\mathcal{G}\left(2,1\right)$,
the even subalgebra is isomorphic to that of
split-quaternion, $\mathcal{G}\left(p,q\right)_+,\cong\mathbb{H}_{s}$.

Similarly, since 
\begin{equation}
\left(e_{1}e_{2}e_{3}\right)^{2}=-\lambda_{1}\lambda_{2}\lambda_{3},\label{eq:ComplexPseudoscalar}
\end{equation}
the \emph{pseudoscalar subalgebra} $\textit{Ps}\left(\mathcal{G}\right)$ spanned
by the unit and the pseudoscalar, i.e. $\left\{ 1,e_{1}e_{2}e_{3}\right\} $,
is isomorphic to either the complex $\mathbb{C}$ or the split complex
numbers $\mathbb{C}_{s}$ depending on the signature $\left(p,q\right)$.
More specifically, one obtains that in $\mathcal{G}\left(3,0\right)$
and $\mathcal{G}\left(1,2\right)$ the pseudoscalar algebra is isomorphic
to the complex numbers, $\textit{Ps}\left(\mathcal{G}\right)\cong\mathbb{C}$.
On the other hand, in $\mathcal{G}\left(2,1\right)$ and $\mathcal{G}\left(0,3\right)$,
the pseudoscalar algebra is isomorphic to the split-complex numbers, $\textit{Ps}\left(\mathcal{G}\right)\cong\mathbb{C}_{s}$.

It is now important to notice that the trivector $e_{1}e_{2}e_{3}$
commutes with any element of the geometric algebra. Indeed, one has
\begin{align}
\left(e_{1}e_{2}e_{3}\right)e_{i} & =\left(-1\right)^{2}\left(e_{i}\right)\left(e_{1}e_{2}e_{3}\right),\\
\left(e_{1}e_{2}e_{3}\right)\left(e_{i}e_{j}\right) & =\left(-1\right)^{4}\left(e_{i}e_{j}\right)\left(e_{1}e_{2}e_{3}\right).
\end{align}
Therefore, combining all previous arguments one obtains that $\mathcal{G}\left(3,0\right)$
is isomorphic to the algebra of biquaternions $\mathbb{C}\otimes\mathbb{H}$;
the geometric algebra $\mathcal{G}\left(2,1\right)$ is isomorphic
to $\mathbb{C}_{s}\otimes\mathbb{H}_{s}$, $\mathcal{G}\left(1,2\right)$
to $\mathbb{C}\otimes\mathbb{H}_{s}$ and, finally, $\mathcal{G}\left(0,3\right)$
is isomorphic to $\mathbb{C}_{s}\otimes\mathbb{H}$. In all these
cases the isomorphism is concretely realised setting 
\begin{equation}
e_{1}e_{2}=\text{\textbf{i}},e_{2}e_{3}=\text{\textbf{j}},e_{1}e_{3}=\text{\textbf{k}},e_{1}e_{2}e_{3}=\iota.
\end{equation}
 A summary of these relations is found in Table \ref{tab:Correspondence-between-Geometric}.
\begin{table}
\centering{}%
\begin{tabular}{|c|c|c|c|c|c|}
\hline 
Algebra & Tensor product & $\mathcal{G}\left(p,q\right)_+,$ & $\textit{Ps}\left(\mathcal{G}\right)$ & $\left(\mathcal{G}\left(p,q\right),\bullet,N\right)$ & $\left(\mathcal{G}\left(p,q\right),\bullet_{-},N\right)$\tabularnewline
\hline 
\hline 
$\mathcal{G}\left(3,0\right)$ & $\mathbb{C}\otimes\mathbb{H}$ & $\mathbb{H}$ & $\mathbb{C}$ & $\mathbb{O}$ & $\mathbb{O}_{s}$\tabularnewline
\hline 
$\mathcal{G}\left(2,1\right)$ & $\mathbb{C}_{s}\otimes\mathbb{H}_{s}$ & $\mathbb{H}_{s}$ & $\mathbb{C}_{s}$ & $\mathbb{O}_{s}$ & $\mathbb{O}_{s}$\tabularnewline
\hline 
$\mathcal{G}\left(1,2\right)$ & $\mathbb{C}\otimes\mathbb{H}_{s}$ & $\mathbb{H}_{s}$ & $\mathbb{C}$ & $\mathbb{O}_{s}$ & $\mathbb{O}_{s}$\tabularnewline
\hline 
$\mathcal{G}\left(0,3\right)$ & $\mathbb{C}_{s}\otimes\mathbb{H}$ & $\mathbb{H}$ & $\mathbb{C}_{s}$ & $\mathbb{O}_{s}$ & $\mathbb{O}$\tabularnewline
\hline 
\end{tabular}\caption{\label{tab:Correspondence-between-Geometric}Correspondence between
Geometric Algebra, its isomorphic algebra obtained as a tensor product
of two Hurwitz algebras, the even subalgebra $\mathcal{G}\left(p,q\right)_+,$
with basis $\mathscr{R}=\left\{ 1,e_{1}e_{2},e_{2}e_{3},e_{1}e_{3}\right\} $,
and the pseudoscalar subalgebra $\textit{Ps}\left(\mathcal{G}\right)$ with
basis $\left\{ 1,e_{1}e_{2}e_{3}\right\} .$ Finally, on the right
we have included the Hurwitz algebras corresponding to $\mathcal{G}\left(p,q\right)$
with the new products $\bullet$ and $\bullet_{-}$ defined in (\ref{eq:octonionic product})
and (\ref{eq:split-octonionic product-1}) and norm $N$ defined in
(\ref{eq:NormOctonionic}). }
\end{table}

Finally, it is important to point out that a Hurwitz algebra is a
normed algebra, and therefore the definition of such an algebra is
completed only when a norm is defined. Obviously, the above isomorphisms
between geometric subalgebras and Hurwitz algebras implicitly define
a norm that turns these algebras into composition algebras. Nevertheless,
one might ask if there is a way to define such norms without leaving
the geometric algebraic formalism. 

Indeed, we saw in section \ref{subsec:Biquaternionic-algebras} that
every biquaternionic algebra inherits three different conjugations
\begin{equation}
\widetilde{x}=z\otimes\widetilde{q},\,\,\overline{x}=\overline{z}\otimes q,\,\,x^{\dagger}=\overline{z}\otimes\widetilde{q},\label{eq:biquaternionicConj-1}
\end{equation}
where $x\in C\otimes H$ with $C\in\left\{ \mathbb{C},\mathbb{C}_{s}\right\} $,
$H\in\left\{ \mathbb{H},\mathbb{H}_{s}\right\} $ and $z\longrightarrow\overline{z}\in C$,
$q\longrightarrow\widetilde{q}\in H$ indicate the (split-)complex
and the (split-)quaternionic conjugation respectively. 

We now focus on reversion $x\longrightarrow x^{\dagger}$ to see how
it acts on the elements of the basis $\mathscr{B}$. In this case,
we obtain 
\begin{align}
\left(e_{1}e_{2}\right)^{\dagger} & =e_{2}e_{1}=-\text{\textbf{i}},\left(e_{2}e_{3}\right)^{\dagger}=-\text{\textbf{j}},\left(e_{1}e_{3}\right)^{\dagger}=-\text{\textbf{k}},\\
\left(e_{1}e_{2}e_{3}\right)^{\dagger} & =e_{3}e_{2}e_{1}=-\iota,
\end{align}
which corresponds to the biquaternionic conjugation given by $x^{\dagger}=\overline{z}\otimes\widetilde{q}$.
This implies that defining the norm through the reversion, i.e.,
\begin{equation}
n\left(x\right)=xx^{\dagger},
\end{equation}
over $\textit{Ps}\left(\mathcal{G}\right)$ and $\mathcal{G}\left(p,q\right)_+,$
will give the usual complex and quaternionic norm that turns these
algebras into composition algebras. 

Nevertheless, it is interesting to see the action of the other involutions
since the same result can be obtained choosing different conjugations.
The inversion $x\longrightarrow\overline{x}$ acts on the elements
of the basis as
\begin{equation}
\overline{e_{i}}=-e_{i},\quad \overline{e_{i}e_{j}}=e_{i}e_{j},\quad \overline{e_{1}e_{2}e_{3}}=-e_{1}e_{2}e_{3},
\end{equation}
so that in the biquaternionic algebra the inversion corresponds to
the complex conjugation $\overline{x}=\overline{z}\otimes q$. Instead,
Clifford conjugation $x\longrightarrow\widetilde{x}$ acts on the
element of the basis 
\begin{equation}
\widetilde{e_{i}}=-e_{i},\quad \widetilde{e_{i}e_{j}}=e_{j}e_{i}=-e_{i}e_{j},\quad \widetilde{e_{1}e_{2}e_{3}}= -e_{3}e_{2}e_{1} = e_{1}e_{2}e_{3},
\end{equation}
which, in the corresponding biquaternionic algebra, is the quaternionic
conjugation $\widetilde{x}=z\otimes\widetilde{q}$. Thus the three
main involutions of Clifford algebras, i.e., reversion, inversion and
Clifford conjugation, correspond to biquaternionic, complex and quaternionic
conjugation respectively (see Table \ref{tab:Correspondence-between-involutio}).
The fourth involution, i.e. the full grade inversion, will be used
in the construction of the octonionic norm.
\begin{table}
\begin{centering}
\begin{tabular}{|c|c|c|}
\hline 
\textbf{Involution} & \textbf{Geometric algebra} & \textbf{Biquaternionic}\tabularnewline
\hline 
\hline 
$x\longrightarrow x^{\dagger}$ & Reversion & $x^{\dagger}=\overline{z}\otimes\widetilde{q}$\tabularnewline
\hline 
$x\longrightarrow\overline{x}$ & Inversion & $\overline{x}=\overline{z}\otimes q$\tabularnewline
\hline 
$x\longrightarrow\widetilde{x}$ & Clifford conjugation & $\widetilde{x}=z\otimes\widetilde{q}$\tabularnewline
\hline 
\end{tabular}
\caption{\label{tab:Correspondence-between-involutio}Correspondence between
involutions of 3D geometric algebra and different conjugation types
defined over biquaternionic algebras, i.e. $C\otimes H$ with $C\in\left\{ \mathbb{C},\mathbb{C}_{s}\right\} $,
$H\in\left\{ \mathbb{H},\mathbb{H}_{s}\right\} $. In the table we
show how the involutions act on the elements of the algebra $x=z\otimes q\in C\otimes H$
where $z\in C$ and $q\in H$. The fourth involution $x\protect\longrightarrow x^{*}$, called full grade
inversion, corresponds to the octonionic conjugation. }
\par\end{centering}
\end{table}

\subsection{\label{subsec:Octonionic-algebras}Octonionic algebras}

We now proceed recovering the octonionic algebras. The embeddings
that we present here were inspired by Hitzer \cite{Hi22} that extended
to the three-dimensional geometric algebras the work of Lasenby \cite{La22}
over the spacetime algebra (STA) $Cl\left(1,3\right)$. Note that,
since the Clifford algebra product is associative, it does not allow
a direct embedding of the octonionic product, which is non-associative. 

In order to realise the octonionic and the split-octonionic product
with 3D geometric algebras we have to split those algebras into the
disjoint union of two subset
\begin{equation}
\mathcal{G}\left(p,q\right)=\mathcal{G}_{+}\left(p,q\right)\uplus\mathcal{G}_{-}\left(p,q\right),
\end{equation}
where $\mathcal{G}_{+}\left(p,q\right)$ is the set of all multivector
of even grade, while $\mathcal{G}_{-}\left(p,q\right)$ is the set
of all multivector of odd grade. It is worth noting that, while the
set $\mathcal{G}_{+}\left(p,q\right)$ is closed under the Clifford
product and thus forms an algebra that we called $\mathcal{G}\left(p,q\right)_+,$,
the set of $\mathcal{G}_{-}\left(p,q\right)$ does not form an algebra. 

We now proceed with a formalism aimed to enhance the link between
the new construction and the Cayley-Dickson process. In order to do
that, consider an element $x\in\mathcal{G}\left(p,q\right)$ as decomposed
into 
\begin{equation}
x=\left(x_{+},x_{-}\right),
\end{equation}
with $x_{+}\in\mathcal{G}_{+}\left(p,q\right)$ and $x_{-}\in\mathcal{G}_{-}\left(p,q\right)$
so that we can define a new product over $\mathcal{G}\left(p,q\right)$
componentwise. Then, the octonionic product over $\mathcal{G}\left(p,q\right)$
is defined as 
\begin{equation}
\label{eq:octoProductGAOrderedPair}
\left(x_{+},x_{-}\right)\bullet\left(y_{+},y_{-}\right)=\left(x_{+}y_{+}+\widetilde{y_{-}}x_{-},y_{-}x_{+}+x_{-}\widetilde{y_{+}}\right),
\end{equation}
\label{eq:octonionic product}
where $x_{+},y_{+}\in\mathcal{G}_{+}\left(p,q\right)$ and $x_{-},y_{-}\in\mathcal{G}_{-}\left(p,q\right)$.
And the conjugation over the new algebra is given by
\begin{equation}
x\longrightarrow x^{*}=\left(\widetilde{x_{+}},-x_{-}\right).
\end{equation}
The above product is in fact a slight variation of the one introduced
in the Cayley-Dickson process (which is usually applied
to two copies of the same algebra, while here only $\mathcal{G}_{+}\left(p,q\right)$
is a quaternionic algebra) and with the Clifford conjugation $x\longrightarrow\widetilde{x},$
which is the quaternionic conjugation of $\mathcal{G}\left(p,q\right)$. 

The above notation has been introduced to manifest the resemblance
with the Cayley-Dickson process, but for a more geometric algebraic
approach we can define the octonionic product as
\begin{align}
\label{eq:octoProductGAFull}
x\bullet y & =x_{+}y_{+}+\widetilde{y_{-}}x_{-}+y_{-}x_{+}+x_{-}\widetilde{y_{+}}\\
 & =x_{+}y_{+}-y_{-}^{\dagger}x_{-}+y_{-}x_{+}+x_{-}y_{+}^{\dagger}.\nonumber 
\end{align}
Moreover, without any reference to the Cayley-Dickson process one
can consider the full grade inversion $x\longrightarrow x^{*}$ defined
in (\ref{eq:fullGradeInversion}). Then, one defines the norm $N\left(x\right)\in\mathbb{R}$
as 
\begin{equation}
N\left(x\right)=x\bullet x^{*}.\label{eq:NormOcto}
\end{equation}
 To assure the norm is correctly defined, we need the following useful
Lemma
\begin{lem}
\label{lem:wellDefinedNorm}
Let $x\in\mathcal{G}\left(p,q\right)$. The norm \textup{$N\left(x\right)=x\bullet x^{*}$}
is well-defined and is a scalar, i.e., $N\left(x\right)\in\mathbb{R}$.
\end{lem}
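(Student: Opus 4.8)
The plan is to evaluate $x\bullet x^{*}$ directly from the definition of the product $\bullet$ in (\ref{eq:octoProductGAOrderedPair}) and of the full grade inversion in (\ref{eq:fullGradeInversion}), and then to recognize the result as a real scalar by a grade argument particular to the case $p+q=3$.

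First I would record that the two descriptions of $x^{*}$ used in the text coincide: on an even multivector inversion acts as the identity, so Clifford conjugation there reduces to reversion, $\widetilde{x_{+}}=x_{+}^{\dagger}$; hence the ordered-pair conjugation $(\widetilde{x_{+}},-x_{-})$ is literally $x_{+}^{\dagger}-x_{-}=2\langle x\rangle_{0}-x$, the full grade inversion. Since moreover the parity decomposition $x=(x_{+},x_{-})$ is unique, $N$ is an unambiguously defined function of $x$. Now substitute $y=x^{*}$, i.e. $y_{+}=\widetilde{x_{+}}$ and $y_{-}=-x_{-}$, into (\ref{eq:octoProductGAOrderedPair}). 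Using that Clifford conjugation is an involution, $\widetilde{\widetilde{x_{+}}}=x_{+}$, the odd component of the product is $y_{-}x_{+}+x_{-}\widetilde{y_{+}}=-x_{-}x_{+}+x_{-}x_{+}=0$, so $x\bullet x^{*}$ lies entirely in the even part:
\begin{equation}
N(x)=x\bullet x^{*}=x_{+}\widetilde{x_{+}}-\widetilde{x_{-}}\,x_{-}\in\mathcal{G}(p,q)_{+}.
\end{equation}
A priori this still has both a grade-$0$ and a grade-$2$ component.

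It then remains to eliminate the grade-$2$ component. I would rewrite the two terms using $\widetilde{x_{+}}=x_{+}^{\dagger}$ (even case) and, since inversion is $-\mathrm{id}$ on odd multivectors, $\widetilde{x_{-}}=\overline{x_{-}}^{\dagger}=-x_{-}^{\dagger}$, so that
\begin{equation}
N(x)=x_{+}x_{+}^{\dagger}+x_{-}^{\dagger}x_{-}.
\end{equation}
Each summand is a product of two multivectors of the same parity, hence even, and each is invariant under reversion: $(x_{+}x_{+}^{\dagger})^{\dagger}=x_{+}x_{+}^{\dagger}$ and $(x_{-}^{\dagger}x_{-})^{\dagger}=x_{-}^{\dagger}x_{-}$. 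But in a three-dimensional geometric algebra the only self-reverse grades are $0$ and $1$, so an even self-reverse multivector must be a pure scalar. Hence both summands are real, and $N(x)\in\mathbb{R}$. Equivalently, for the first summand one may invoke that $\mathcal{G}(p,q)_{+}$ is a quaternionic or split-quaternionic algebra on which reversion is the conjugation, so $x_{+}x_{+}^{\dagger}$ is its norm times the unit.

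The calculation is short; the only care needed is the bookkeeping among reversion, inversion and Clifford conjugation --- in particular the sign that Clifford conjugation picks up on odd grades --- together with the use of $\dim=3$, which is exactly what makes ``even and self-reverse'' force grade $0$. This is the one genuinely dimension-dependent step: in higher dimensions grade $4$ is also self-reverse and the argument would have to be supplemented.
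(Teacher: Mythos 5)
Your proposal is correct and follows essentially the same route as the paper: substituting $y=x^{*}$ into the definition of $\bullet$, observing that the odd part cancels, arriving at $x_{+}x_{+}^{\dagger}+x_{-}^{\dagger}x_{-}$, and concluding by the same self-reversion grade argument (the paper packages this as $(xx^{\dagger})_{+}=\langle xx^{\dagger}\rangle_{0}$, using its earlier observation that $xx^{\dagger}$ is self-reverse and hence of grade $0,1$ only). Your added remarks --- the consistency of the two descriptions of $x^{*}$, and the explicit flag that dimension $3$ is what rules out a self-reverse grade-$4$ part --- are accurate refinements rather than a different method.
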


\begin{proof}
To show that is well-defined we need to show that $x\bullet x^{*}$
is a scalar. Indeed,
\begin{align}
N\left(x\right) & =x\bullet x^{*}=x\bullet\left(x_{+}^{\dagger}-x_{-}\right)\\
 & =x_{+}x_{+}^{\dagger}-\left(-x_{-}^{\dagger}\right)x_{-}+\left(-x_{-}\right)x_{+}+x_{-}\left(x_{+}^{\dagger}\right)^{\dagger}\nonumber \\
 & =x_{+}x_{+}^{\dagger}+x_{-}^{\dagger}x_{-}-x_{-}x_{+}+x_{-}x_{+}\nonumber \\
 & =(xx^{\dagger})_{+}=\left\langle xx^{\dagger}\right\rangle _{0}.\nonumber 
\end{align}
\end{proof}

\begin{cor}
\label{cor:NormOddEvenDecomp}
Let $x=x_{+}+x_{-},$ with $x\in\mathcal{G}\left(p,q\right)$, $x_{+}\in\mathcal{G}\left(p,q\right)$
and $x_{-}\in\mathcal{G}\left(p,q\right)$. Then 
\begin{equation}
    N\left(x\right) = N(x_{+})+N(x_{-}).
\end{equation}
\end{cor}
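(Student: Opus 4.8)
The plan is to reduce everything to Lemma \ref{lem:wellDefinedNorm}, which already did the hard computational work: for any $x\in\mathcal{G}\left(p,q\right)$ it gives the closed form
\begin{equation}
N\left(x\right)=x\bullet x^{*}=\left(xx^{\dagger}\right)_{+}=\left\langle xx^{\dagger}\right\rangle _{0}.
\end{equation}
Since the statement already decomposes $x=x_{+}+x_{-}$ into its even and odd parts, the natural move is to substitute this decomposition into $\left\langle xx^{\dagger}\right\rangle _{0}$ and expand. Using that reversion is linear, one gets $x^{\dagger}=x_{+}^{\dagger}+x_{-}^{\dagger}$ and hence
\begin{equation}
xx^{\dagger}=x_{+}x_{+}^{\dagger}+x_{-}x_{-}^{\dagger}+x_{+}x_{-}^{\dagger}+x_{-}x_{+}^{\dagger}.
\end{equation}

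The key observation is a parity count. Reversion preserves grade, so $x_{+}^{\dagger}$ is even and $x_{-}^{\dagger}$ is odd; therefore the two diagonal terms $x_{+}x_{+}^{\dagger}$ and $x_{-}x_{-}^{\dagger}$ are even, while the two cross terms $x_{+}x_{-}^{\dagger}$ and $x_{-}x_{+}^{\dagger}$ are odd. An odd multivector in a 3D geometric algebra has only grade-$1$ and grade-$3$ components, so its grade-$0$ part vanishes. Applying $\left\langle \,\cdot\,\right\rangle _{0}$ to the expansion above kills the cross terms and leaves
\begin{equation}
N\left(x\right)=\left\langle xx^{\dagger}\right\rangle _{0}=\left\langle x_{+}x_{+}^{\dagger}\right\rangle _{0}+\left\langle x_{-}x_{-}^{\dagger}\right\rangle _{0}.
\end{equation}
To finish, apply Lemma \ref{lem:wellDefinedNorm} to the pure-even element $x_{+}$ and to the pure-odd element $x_{-}$ separately (the Lemma is stated for an arbitrary element of $\mathcal{G}\left(p,q\right)$, so nothing special is needed): this gives $N\left(x_{+}\right)=\left\langle x_{+}x_{+}^{\dagger}\right\rangle _{0}$ and $N\left(x_{-}\right)=\left\langle x_{-}x_{-}^{\dagger}\right\rangle _{0}$, whence $N\left(x\right)=N\left(x_{+}\right)+N\left(x_{-}\right)$.

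I expect no real obstacle here; this is essentially a bookkeeping corollary of the Lemma. The only two points that need a word of care are (i) that reversion preserves grade parity, so the cross terms $x_{+}x_{-}^{\dagger}$ and $x_{-}x_{+}^{\dagger}$ are genuinely odd and thus drop out of the scalar projection, and (ii) that invoking the Lemma for $x_{+}$ and $x_{-}$ is legitimate because the Lemma quantifies over all of $\mathcal{G}\left(p,q\right)$, not merely over homogeneous-parity elements. Alternatively, one could bypass the Lemma and expand $x\bullet x^{*}$ directly from the componentwise definition \eqref{eq:octoProductGAFull} with $x=\left(x_{+},x_{-}\right)$, $y=x^{*}=\left(x_{+}^{\dagger},-x_{-}\right)$, but this just re-derives the Lemma's computation and is less clean, so the route above is preferable.
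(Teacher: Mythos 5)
Your proposal is correct and follows essentially the same route as the paper: expand $xx^{\dagger}=(x_{+}+x_{-})(x_{+}^{\dagger}+x_{-}^{\dagger})$, discard the odd cross terms under the grade projection, and identify the surviving diagonal terms with $N(x_{+})$ and $N(x_{-})$ via Lemma \ref{lem:wellDefinedNorm}. The only cosmetic difference is that you project onto grade $0$ while the paper projects onto the even part, which the Lemma shows coincide on products of the form $xx^{\dagger}$.
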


\begin{proof}
The expansion of the geometric product in $N\left(x\right)$ immediately
yields 
\begin{align}
\left(xx^{\dagger}\right)_{+} & =\left(\left(x_{+}+x_{-}\right)\left(x_{+}^{\dagger}+x_{-}^{\dagger}\right)\right)_{+}\\
 & =x_{+}x_{+}^{\dagger}+x_{-}x_{-}^{\dagger}=N\left(x_{+}\right)+N\left(x_{-}\right).\nonumber 
\end{align}
\end{proof}
For practical purposes we are interested in an explicit formula for $N\left(x\right)$.
Thus, let $x_{+}=x_{0}+x_{1}e_{1}e_{2}+x_{2}e_{2}e_{3}+x_{3}e_{1}e_{3}$,
$x_{-}=x_{4}e_{1}+x_{5}e_{2}+x_{6}e_{3}+x_{7}e_{1}e_{2}e_{3}$ and
let $\lambda_{1},\lambda_{2},\lambda_{3}\in\left\{ \pm1\right\} $
be the coefficients of the quadratic form for $\mathcal{G}\left(p,q\right)$,
i.e., $Q=\text{diag\ensuremath{\left(\lambda_{1},\lambda_{2},\lambda_{3}\right)}}.$ Then the expansion in Corollary \ref{cor:NormOddEvenDecomp} provides a straightforward way to evaluate the norm. Carrying this out one finds
\begin{equation}
    N\left(x\right)=x_{0}^{2}+x_{1}^{2}\left(\lambda_{1}\lambda_{2}\right)+x_{2}^{2}\left(\lambda_{2}\lambda_{3}\right)+x_{3}^{2}\left(\lambda_{1}\lambda_{3}\right)+x_{4}^{2}\lambda_{1}+x_{5}^{2}\lambda_{2}+x_{6}^{2}\lambda_{3}+x_{7}^{2}\lambda_{1}\lambda_{2}\lambda_{3}.\label{eq:NormOctonionic}
\end{equation}

We are now set for the following theorem.
\begin{thm}
\label{thm:Hurwitz}Let $x,y\in\mathcal{G}\left(p,q\right)$ with
$p+q=3$. Then $\left(\mathcal{G}\left(p,q\right),\bullet,N\right)$
with $\bullet$ and $N$ defined in (\ref{eq:octonionic product})
and (\ref{eq:NormOcto}) is an Hurwitz algebra, and thus isomorphic
to either the octonions or the split-octonions.
\end{thm}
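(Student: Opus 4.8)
The plan is to verify the three defining properties of a Hurwitz algebra for the triple $\left(\mathcal{G}\left(p,q\right),\bullet,N\right)$: that it is unital, that $N$ is a non-degenerate quadratic form, and that $N$ is multiplicative under $\bullet$. Once these are established, Hurwitz's theorem (cited in Section \ref{sec:Hurwitz-algebras}) immediately forces the algebra, being eight-dimensional, to be isomorphic to $\mathbb{O}$ or $\mathbb{O}_{s}$. The unital property should come first and is essentially bookkeeping: taking $1=\left(1,0\right)$ in the ordered-pair notation of (\ref{eq:octoProductGAOrderedPair}), one checks $\left(1,0\right)\bullet\left(y_{+},y_{-}\right)=\left(y_{+},y_{-}\right)$ and the reverse, using that $\widetilde{1}=1$ and $\widetilde{0}=0$. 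Non-degeneracy of $N$ is read off directly from the explicit diagonal formula (\ref{eq:NormOctonionic}): every coefficient $\lambda_{i}$, $\lambda_{i}\lambda_{j}$, $\lambda_{1}\lambda_{2}\lambda_{3}$ lies in $\left\{\pm1\right\}$, so the quadratic form is diagonal with all entries nonzero, hence non-degenerate (and, depending on the number of negative entries, either positive-definite or of split signature $(4,4)$ — this is exactly what separates $\mathbb{O}$ from $\mathbb{O}_{s}$ and will be used for the classification in Table \ref{tab:Correspondence-between-Geometric}).

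The substantive step is the composition identity $N\left(x\bullet y\right)=N\left(x\right)N\left(y\right)$. My plan is to reduce it to statements about the associative geometric product and the reversion anti-automorphism, exploiting Lemma \ref{lem:wellDefinedNorm}, which already tells us $N\left(x\right)=\left\langle xx^{\dagger}\right\rangle_{0}=\left(xx^{\dagger}\right)_{+}$. First I would compute the even and odd parts of $z\coloneqq x\bullet y$ from (\ref{eq:octoProductGAFull}): $z_{+}=x_{+}y_{+}+\widetilde{y_{-}}\,x_{-}$ and $z_{-}=y_{-}x_{+}+x_{-}\widetilde{y_{+}}$. Then, using Corollary \ref{cor:NormOddEvenDecomp}, $N\left(z\right)=N\left(z_{+}\right)+N\left(z_{-}\right)=\left(z_{+}z_{+}^{\dagger}\right)+\left(z_{-}z_{-}^{\dagger}\right)$, where each term is a scalar. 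Expanding $z_{+}z_{+}^{\dagger}$ via $\left(ab\right)^{\dagger}=b^{\dagger}a^{\dagger}$ and $\widetilde{w}=\overline{w}^{\dagger}$ produces four terms; the two "diagonal" terms are $x_{+}y_{+}y_{+}^{\dagger}x_{+}^{\dagger}$ and $\widetilde{y_{-}}\,x_{-}x_{-}^{\dagger}\,\widetilde{y_{-}}^{\dagger}$, and since on the even subalgebra $y_{+}y_{+}^{\dagger}=N(y_{+})$ is a central scalar (respectively $x_{-}x_{-}^{\dagger}$ is a scalar by the remarks preceding Section \ref{subsec:Complex-and-quaternionic}), these collapse to $N\left(x_{+}\right)N\left(y_{+}\right)+N\left(x_{-}\right)N\left(y_{-}\right)$. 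An identical expansion of $z_{-}z_{-}^{\dagger}$ yields $N\left(x_{+}\right)N\left(y_{-}\right)+N\left(x_{-}\right)N\left(y_{+}\right)$ from its diagonal terms, and the sum of all four diagonal contributions is precisely $\left(N\left(x_{+}\right)+N\left(x_{-}\right)\right)\left(N\left(y_{+}\right)+N\left(y_{-}\right)\right)=N\left(x\right)N\left(y\right)$ by Corollary \ref{cor:NormOddEvenDecomp}.

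The main obstacle is therefore showing that the four remaining "cross" terms — those mixing $x_{+}y_{+}$ with $\widetilde{y_{-}}x_{-}$ in $z_{+}z_{+}^{\dagger}$, and $y_{-}x_{+}$ with $x_{-}\widetilde{y_{+}}$ in $z_{-}z_{-}^{\dagger}$ — cancel when the two scalar contributions $\left(z_{+}z_{+}^{\dagger}\right)$ and $\left(z_{-}z_{-}^{\dagger}\right)$ are added. Here I would take the scalar part $\left\langle\cdot\right\rangle_{0}$ throughout and use its cyclicity together with the identities $\left\langle w\right\rangle_{0}=\left\langle w^{\dagger}\right\rangle_{0}$ and the interplay of $\dagger$, $\bar{\phantom{x}}$, and $\widetilde{\phantom{x}}$ on even versus odd grades; the cross terms of $z_{+}z_{+}^{\dagger}$ should turn out to be the negatives of those of $z_{-}z_{-}^{\dagger}$ after reversing and relabeling, which is exactly the reason the Clifford conjugation $\widetilde{\phantom{x}}$ (rather than plain reversion) appears in the definition of $\bullet$. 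An alternative, more pedestrian route — which I would keep in reserve in case the grade-juggling proves delicate — is to use the explicit coordinate formula (\ref{eq:NormOctonionic}) together with an explicit multiplication table for $\bullet$ on the basis $\mathscr{B}$, verifying $N(x\bullet y)=N(x)N(y)$ as a polynomial identity in the sixteen coordinates; this is mechanical but certain. Finally, reading off the signature of (\ref{eq:NormOctonionic}) for each $(p,q)$ distinguishes $\mathbb{O}$ (signature $(8,0)$, only for $\mathcal{G}(3,0)$) from $\mathbb{O}_{s}$ (signature $(4,4)$, all other cases), completing the classification.
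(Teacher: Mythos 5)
Your plan is correct and follows essentially the same route as the paper: expand $N\left(x\bullet y\right)=\left\langle \left(x\bullet y\right)\left(x\bullet y\right)^{\dagger}\right\rangle _{0}$, recognize the four diagonal terms as $N\left(x_{\pm}\right)N\left(y_{\pm}\right)$, cancel the four cross terms using the cyclic property $\left\langle wv\right\rangle _{0}=\left\langle vw\right\rangle _{0}$, and invoke Hurwitz's theorem for the eight-dimensional unital composition algebra. The only difference is presentational (you route the even/odd split through Corollary \ref{cor:NormOddEvenDecomp} and add the explicit non-degeneracy check), and the cancellation mechanism you identify is exactly the one the paper uses.
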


\begin{proof}
First of all a straightforward check shows that the product is bilinear,
the algebra is thus well-defined, is eight-dimensional and the element
$1$ is a unit element for the algebra, i.e., $x\bullet1=1\bullet x=x$
for every $x\in\mathcal{G}\left(p,q\right)$. Therefore $\left(\mathcal{G}\left(p,q\right),\bullet,N\right)$
is an eight-dimensional unital algebra, and to prove the theorem we
just need to show that is a composition algebra, i.e.,
\begin{equation}
N\left(x\bullet y\right)=N\left(x\right)N\left(y\right).
\end{equation}
We proceed with a direct expansion of the norm $N\left(x\right)$.
Indeed, 
\begin{align}
N\left(x\bullet y\right) & =\left\langle \left(x\bullet y\right)\left(x\bullet y\right)^{\dagger}\right\rangle _{0}\\
 & =\left\langle \left(x_{+}y_{+}-y_{-}^{\dagger}x_{-}+y_{-}x_{+}+x_{-}y_{+}^{\dagger}\right)\left(x_{+}y_{+}-y_{-}^{\dagger}x_{-}+y_{-}x_{+}+x_{-}y_{+}^{\dagger}\right)^{\dagger}\right\rangle _{0}.\nonumber 
\end{align}
In the above expansion we are free to discard the odd terms, since
they do not contribute to the grade $0$ part. Thus
\begin{align}
N\left(x\bullet y\right) & =\left\langle x_{+}y_{+}y_{+}^{\dagger}x_{+}^{\dagger}-x_{+}y_{+}x_{-}^{\dagger}y_{-}-y_{-}^{\dagger}x_{-}y_{+}^{\dagger}x_{+}^{\dagger}+y_{-}^{\dagger}x_{-}x_{-}^{\dagger}y_{-}\right.\\
 & \left.+y_{-}x_{+}x_{+}^{\dagger}y_{-}^{\dagger}+y_{-}x_{+}y_{+}x_{-}^{\dagger}+x_{-}y_{+}^{\dagger}x_{+}^{\dagger}y_{-}^{\dagger}+x_{-}y_{+}^{\dagger}y_{+}x_{-}^{\dagger}\right\rangle _{0}\nonumber \\
 & =\left\langle N\left(x_{+}\right)N\left(y_{+}\right)-x_{+}y_{+}x_{-}^{\dagger}y_{-}-y_{-}^{\dagger}x_{-}y_{+}^{\dagger}x_{+}^{\dagger}+N\left(x_{-}\right)N\left(y_{-}\right)\right.\nonumber \\
 & \left.+N\left(x_{+}\right)N\left(y_{-}\right)+y_{-}x_{+}y_{+}x_{-}^{\dagger}+x_{-}y_{+}^{\dagger}x_{+}^{\dagger}y_{-}^{\dagger}+N\left(x_{-}\right)N\left(y_{+}\right)\right\rangle _{0}.\nonumber 
\end{align}
Now, since $\left\langle xy\right\rangle _{0}=\left\langle yx\right\rangle _{0}$
for any multivectors $x,y$, the non-norm terms in this expression cancel
each other out leaving
\begin{align}
N\left(x\bullet y\right) & =\left(N\left(x_{+}\right)+N\left(x_{-}\right)\right)\left(N\left(y_{+}\right)+N\left(y_{-}\right)\right)\\
 & =N\left(x\right)N\left(y\right).\nonumber 
\end{align}
Therefore, $\left(\mathcal{G}\left(p,q\right),\bullet,N\right)$ being a unital
composition algebra of dimension 8, the only two possibilities given by 
the Hurwitz theorem are that $\left(\mathcal{G}\left(p,q\right),\bullet,N\right)$
is isomorphic to either the octonions $\mathbb{O}$ or the split octonions
$\mathbb{O}_{s}$. 
\end{proof}
For completeness, we also produce an explicit formula for the $N\left(x\bullet y\right)$.
Setting 
\begin{equation}
\begin{array}{cc}
x_{+} & =x_{0}+x_{1}e_{1}e_{2}+x_{2}e_{2}e_{3}+x_{3}e_{1}e_{3},\\
x_{-} & =x_{4}e_{1}+x_{5}e_{2}+x_{6}e_{3}+x_{7}e_{1}e_{2}e_{3},\\
y_{+} & =y_{0}+y_{1}e_{1}e_{2}+y_{2}e_{2}e_{3}+y_{3}e_{1}e_{3},\\
y_{-} & =y_{4}e_{1}+y_{5}e_{2}+y_{6}e_{3}+y_{7}e_{1}e_{2}e_{3},
\end{array}
\end{equation}
after a long and tedious calculation and considering that $\lambda_{i}^{2}=1$
for $i\in\left\{ 1,2,3\right\} $, one obtains 
\begin{align}
N\left(x\bullet y\right) & =\left(x_{0}^{2}y_{0}^{2}+x_{1}^{2}y_{1}^{2}+x_{2}^{2}y_{2}^{2}+x_{3}^{2}y_{3}^{2}+x_{4}^{2}y_{4}^{2}+x_{5}^{2}y_{5}^{2}+x_{6}^{2}y_{6}^{2}+x_{7}^{2}y_{7}^{2}\right)+\\
 & +\left(x_{4}^{2}y_{0}^{2}+x_{5}^{2}y_{1}^{2}+x_{6}^{2}y_{3}^{2}+x_{0}^{2}y_{4}^{2}+x_{1}^{2}y_{5}^{2}+x_{3}^{2}y_{6}^{2}+x_{7}^{2}y_{2}^{2}+x_{2}^{2}y_{7}^{2}\right)\lambda_{1}\nonumber \\
 & +\left(x_{5}^{2}y_{0}^{2}+x_{6}^{2}y_{2}^{2}+x_{4}^{2}y_{1}^{2}+x_{1}^{2}y_{4}^{2}+x_{7}^{2}y_{3}^{2}+x_{0}^{2}y_{5}^{2}+x_{2}^{2}y_{6}^{2}+x_{3}^{2}y_{7}^{2}\right)\lambda_{2}\nonumber \\
 & +\left(x_{6}^{2}y_{0}^{2}+x_{5}^{2}y_{2}^{2}+x_{2}^{2}y_{5}^{2}+x_{3}^{2}y_{4}^{2}+x_{0}^{2}y_{6}^{2}+x_{4}^{2}y_{3}^{2}+x_{7}^{2}y_{1}^{2}+x_{1}^{2}y_{7}^{2}\right)\lambda_{3}\nonumber \\
 & +\left(x_{0}^{2}y_{1}^{2}+x_{1}^{2}y_{0}^{2}+x_{2}^{2}y_{3}^{2}+x_{5}^{2}y_{4}^{2}+x_{4}^{2}y_{5}^{2}+x_{7}^{2}y_{6}^{2}+x_{6}^{2}y_{7}^{2}+x_{3}^{2}y_{2}^{2}\right)\lambda_{1}\lambda_{2}\nonumber \\
 & +\left(x_{3}^{2}y_{1}^{2}+x_{2}^{2}y_{0}^{2}+x_{1}^{2}y_{3}^{2}+x_{6}^{2}y_{5}^{2}+x_{5}^{2}y_{6}^{2}+x_{4}^{2}y_{7}^{2}+x_{0}^{2}y_{2}^{2}+x_{0}^{2}y_{2}^{2}\right)\lambda_{2}\lambda_{3}\nonumber \\
 & +\left(x_{3}^{2}y_{0}^{2}+x_{2}^{2}y_{1}^{2}+x_{1}^{2}y_{2}^{2}+x_{0}^{2}y_{3}^{2}+x_{7}^{2}y_{5}^{2}+x_{4}^{2}y_{6}^{2}+x_{5}^{2}y_{7}^{2}+x_{6}^{2}y_{4}^{2}\right)\lambda_{1}\lambda_{3}\nonumber \\
 & +\left(x_{7}^{2}y_{0}^{2}+x_{6}^{2}y_{1}^{2}+x_{4}^{2}y_{2}^{2}+x_{5}^{2}y_{3}^{2}+x_{2}^{2}y_{4}^{2}+x_{3}^{2}y_{5}^{2}+x_{1}^{2}y_{6}^{2}+x_{0}^{2}y_{7}^{2}\right)\lambda_{1}\lambda_{2}\lambda_{3}\nonumber \\
 & =N\left(x\right)N\left(y\right).\nonumber 
\end{align}

Theorem \ref{thm:Hurwitz} proved that the geometric algebra endowed
with the bilinear product $\bullet$ and the norm $N$ obtained from
the full grade inversion, i.e. the triple $\left(\mathcal{G}\left(p,q\right),\bullet,N\right)$,
is isomorphic to either the octonions or the splitoctonions. To discriminate
between them it is sufficient to look at the full expression of the
norm, i.e., 
\begin{equation}
N\left(x\right)=x_{0}^{2}+x_{1}^{2}\left(\lambda_{1}\lambda_{2}\right)+x_{2}^{2}\left(\lambda_{2}\lambda_{3}\right)+x_{3}^{2}\left(\lambda_{1}\lambda_{3}\right)+x_{4}^{2}\lambda_{1}+x_{5}^{2}\lambda_{2}+x_{6}^{2}\lambda_{3}+x_{7}^{2}\lambda_{1}\lambda_{2}\lambda_{3}.
\end{equation}
Indeed, in a division algebra such as the octonions $\mathbb{O}$
an element $x$ is of zero-norm $N\left(x\right)=0$ if and only if $x=0$.
This is the case of $\mathcal{G}\left(3,0\right)$, for which $\left(\lambda_{1},\lambda_{2},\lambda_{3}\right)=\left(1,1,1\right)$
and
\begin{equation}
N\left(x\right)=x_{0}^{2}+x_{1}^{2}+x_{2}^{2}+x_{3}^{2}+x_{4}^{2}+x_{5}^{2}+x_{6}^{2}+x_{7}^{2},
\end{equation}
which is the norm of octonions $\mathbb{O}$. Setting $\left(\lambda_{1},\lambda_{2},\lambda_{3}\right)=\left(-1,-1,-1\right)$,
which is the case for $\mathcal{G}\left(0,3\right)$, we do have 
\begin{equation}
N\left(x\right)=x_{0}^{2}+x_{1}^{2}+x_{2}^{2}+x_{3}^{2}-x_{4}^{2}-x_{5}^{2}-x_{6}^{2}-x_{7}^{2},
\end{equation}
which is the norm of split-octonions $\mathbb{O}_{s}$. In the case
of $\mathcal{G}\left(2,1\right)$, for which $\left(\lambda_{1},\lambda_{2},\lambda_{3}\right)=\left(1,1,-1\right)$,
we have 

\begin{equation}
N\left(x\right)=x_{0}^{2}+x_{1}^{2}-x_{2}^{2}-x_{3}^{2}+x_{4}^{2}+x_{5}^{2}-x_{6}^{2}-x_{7}^{2},
\end{equation}
as happens also in the case of $\mathcal{G}\left(1,2\right)$, for which
$\left(\lambda_{1},\lambda_{2},\lambda_{3}\right)=\left(-1,-1,1\right)$,
giving
\begin{equation}
N\left(x\right)=x_{0}^{2}+x_{1}^{2}-x_{2}^{2}-x_{3}^{2}-x_{4}^{2}-x_{5}^{2}\lambda_{2}+x_{6}^{2}+x_{7}^{2}.
\end{equation}

Finally, it is worth noting that the same process and line of reasoning
could be done defining, over the three-dimensional
geometric algebras, another product which differs from $\bullet$ in just one sign, specifically 
\begin{equation}
x\bullet_{-}y=\left(x_{+},x_{-}\right)\bullet_{-}\left(y_{+},y_{-}\right)=\left(x_{+}y_{+}-\widetilde{y_{-}}x_{-},y_{-}x_{+}+x_{-}\widetilde{y_{+}}\right),\label{eq:split-octonionic product-1}
\end{equation}
where $x_{+},y_{+}\in\mathcal{G}_{+}\left(p,q\right)$ and $x_{-},y_{-}\in\mathcal{G}_{-}\left(p,q\right)$.
In fact, a glance at (\ref{eq:octoProductGAFull}) indicates that this amounts to defining the product with the same signs but using reversion instead of Clifford conjugation as the involution. In this case, following the same steps with a small variation, we
obtain the new full form of the norm
\begin{equation}
N\left(x\right)=x_{0}^{2}+x_{1}^{2}\left(\lambda_{1}\lambda_{2}\right)+x_{2}^{2}\left(\lambda_{2}\lambda_{3}\right)+x_{3}^{2}\left(\lambda_{1}\lambda_{3}\right)-x_{4}^{2}\lambda_{1}-x_{5}^{2}\lambda_{2}-x_{6}^{2}\lambda_{3}-x_{7}^{2}\lambda_{1}\lambda_{2}\lambda_{3}.
\end{equation}

Again one obtains that $\left(\mathcal{G}\left(p,q\right),\bullet_{-},N\right)$
is a composition algebra, but this time we have that in case $\mathcal{G}\left(3,0\right)$
and $\left(\lambda_{1},\lambda_{2},\lambda_{3}\right)=\left(1,1,1\right)$
the resulting algebra is isomorphic to that of the split-octonions
$\mathbb{O}_{s}$, as in the case of $\mathcal{G}\left(2,1\right)$ and
$\mathcal{G}\left(1,2\right)$ with $\left(\lambda_{1},\lambda_{2},\lambda_{3}\right)=\left(1,1,-1\right)$
and $\left(\lambda_{1},\lambda_{2},\lambda_{3}\right)=\left(-1,-1,1\right)$
respectively. However, for $\mathcal{G}\left(0,3\right)$ and
$\left(\lambda_{1},\lambda_{2},\lambda_{3}\right)=\left(-1,-1,-1\right)$
the resulting algebra is that of the octonions $\mathbb{O}$. A summary
of the correspondence can be found on the left of Table \ref{tab:Correspondence-between-Geometric}. 

\section{\label{sec:Conclusions-and-further}Conclusions and further developments}

In this paper we show how all seven Hurwitz algebras, i.e. $\mathbb{R},\mathbb{C},\mathbb{H},\mathbb{O}$
and their split companions, can be realized within 3D geometric algebras
$\mathcal{G}\left(p,q\right)$ with $p+q=3$. The result is achieved
considering the even subalgebra $\mathcal{G}\left(p,q\right)_+,$,
which is always of quaternionic or split-quaternionic type, and the
algebra generated by the pseudoscalar $\textit{Ps}\left(\mathcal{G}\right)$
which is always of complex or split-complex type. Since the pseudoscalar
algebra $\textit{Ps}\left(\mathcal{G}\right)$ commutes with all elements of
$\mathcal{G}\left(p,q\right)$, then one obtains the isomorphism between
the 3D geometric algebras and the biquaternionic algebras as shown
in Table \ref{tab:Correspondence-between-Geometric}. Then the main
involutions of 3D geometric algebras, i.e., reversion, inversion and
Clifford conjugation, correspond to biquaternionic, complex and quaternionic
conjugation respectively (see Table \ref{tab:Correspondence-between-involutio}).

The embedding of the octonionic algebras, instead, is achieved defining
a new product over the 8-dimensional algebras through a variation
of the Cayley-Dickson process. Octonionic conjugation is then obtained
using the fourth natural involution over geometric algebras, i.e.
the full grade inversion. This approach is quite insightful and we
are confident that it will lead to geometric algebra realisation not
only of Hurwitz algebras but also of para-Hurwitz algebras and of
all symmetric composition algebras.

\section{Acknowledgment}

Authors would like to thank David Chester and Alessio Marrani for
useful converstions and suggestions.

$*$\noun{ Departamento de Matematica, }\\
\noun{Universidade do Algarve, }\\
\noun{Campus de Gambelas, }\\
\noun{8005-139 Faro, Portugal} 
\begin{verbatim}
a55499@ualg.pt

\end{verbatim}
$**$\noun{Quantum Gravity Research,}

\noun{Topanga Canyon Rd 101 S., }

\noun{California CA, }

\noun{90290, USA}
\begin{verbatim}
Richard@QuantumGravityResearch.org
Klee@QuantumGravityResearch.org

\end{verbatim}

\end{document}